\documentclass[12pt,a4paper]{article}

\usepackage[margin=2.7cm]{geometry}
\usepackage{PRIMEarxiv}
\usepackage{amsmath,amssymb,amsthm}
\usepackage{mathtools}
\usepackage{array}
\usepackage[hidelinks]{hyperref}
\usepackage{orcidlink}
\usepackage{tikz}
\usepackage{pgfplots}
\pgfplotsset{compat=1.18}
\usepgfplotslibrary{groupplots}
\newcommand{\plotbeta}{1}

\newtheorem{theorem}{Theorem}
\newtheorem{proposition}{Proposition}
\newtheorem{corollary}{Corollary}
\newtheorem{remark}{Remark}
\newtheorem{definition}{Definition}

\newcolumntype{C}[1]{>{\centering\arraybackslash}p{#1}}

\title{Exact Uniform L1 Spacing for Solow--Polasky Diversity on Lines and Ordered Pareto Fronts}

\author{%
\normalfont
\begin{tabular}{@{}C{0.30\textwidth}C{0.30\textwidth}C{0.30\textwidth}@{}}
\textbf{Michael T. M. Emmerich~\orcidlink{0000-0002-7342-2090}} &
\textbf{Mahboubeh Nezhadmoghaddam~\orcidlink{0009-0001-7637-7635}} &
\textbf{Jes\'us Guillermo Falc\'on Cardona~\orcidlink{0000-0003-1131-098X}}\\[0.6ex]
{\small Faculty of Information Technology} &
{\small School of Engineering and Sciences} &
{\small Computer Science Department}\\
{\small University of Jyv\"askyl\"a} &
{\small Tecnol\'ogico de Monterrey} &
{\small CICESE Research Center}\\
{\small Jyv\"askyl\"a, Finland} &
{\small Monterrey, M\'exico} &
{\small Ensenada, Baja California, M\'exico}\\[0.4ex]
{\small \href{mailto:michael.t.m.emmerich@jyu.fi}{michael.t.m.emmerich@jyu.fi}} & &
\end{tabular}%
}

\date{\today}

\begin{document}
\maketitle

\begin{abstract}
We study fixed-cardinality maximization of the inverse-matrix Solow--Polasky diversity, equivalently finite metric magnitude for the exponential kernel, on one-dimensional and ordered metric sets.  The analysis starts from the known finite-line gap formula for the exponential kernel, which writes the excess inverse-matrix diversity as a sum of functions of consecutive gaps.  Building on this formula, the main interval theorem proves that, for every $k\geq 2$, the unique maximizing $k$-point subset of $[0,1]$ is the equally spaced set.  Thus the objective selects a uniform gap representation on the real line.  A converse kernel proposition shows that, among normalized non-increasing distance kernels, requiring the corresponding adjacent-gap additive structure forces the exponential family.  Further results transfer the interval theorem to ordered $\ell_1$ (L1, or Manhattan) curves by isometry: the maximizing sets are uniform in accumulated $\ell_1$ length.  As a consequence, monotone biobjective Pareto fronts admit Solow--Polasky optimal finite approximations that are uniformly spaced in accumulated objective-space change, a natural representation when all parts of a continuous front should be covered.  Examples, including a dense connected front and a finite disconnected ZDT3 front, illustrate how the continuous uniform-gap result appears on discrete candidate sets.
\end{abstract}

\keywords{Solow--Polasky diversity \and diversity measures \and finite metric magnitude \and L1 distance \and uniform spacing \and Pareto-front approximation \and multiobjective optimization \and fixed-cardinality subset selection}

\section{Introduction}
\label{sec:introduction}

Let $\beta>0$ be fixed.  For a finite subset $S=\{x_1,\ldots,x_k\}$ of a metric space $(X,d)$, define the exponential similarity matrix
\[
 Z_S = \bigl(e^{-\beta d(x_i,x_j)}\bigr)_{i,j=1}^k .
\]
Whenever $Z_S$ is nonsingular, the Solow--Polasky diversity of $S$ at scale $\beta$ is
\[
 D_\beta(S) = \mathbf{1}^\top Z_S^{-1}\mathbf{1}.
\]
This inverse-matrix quantity was introduced in biodiversity conservation by Solow, Polasky, and Broadus~\cite{SolowPolaskyBroadus1993}.  It has since also appeared as a diversity criterion in discrete diversity optimization~\cite{UlrichBaderThiele2010,PereverdievaEtAl2025} and, more recently, in multiobjective optimization contexts~\cite{Huntsman2023}.  In metric geometry the same expression is the magnitude of the finite metric space after scaling the distance by $\beta$; in the finite exponential-kernel setting, Solow--Polasky diversity and magnitude are therefore mathematically identical, although they arise from different motivations~\cite{LeinsterMeckes2016}.

We consider the fixed-cardinality maximization problem: for a metric set $X$ and an integer $k$, choose a $k$-point subset $S\subset X$ maximizing $D_\beta(S)$.  In a general metric space this is a combinatorial optimization problem.  The real line, however, has an additional ordered structure.  If
\[
0\leq x_1<x_2<\cdots<x_k\leq 1
\]
and $\delta_i=x_{i+1}-x_i$, then every distance $d(x_i,x_j)$ is the sum of the consecutive gaps between $x_i$ and $x_j$.  For the exponential kernel this turns similarities into products over adjacent gaps.  The resulting adjacent-gap formula for the inverse-matrix objective is known: in magnitude theory it is the finite-line formula of Leinster and Willerton~\cite{LeinsterWillerton2009}, and the fixed-scale gap-sum form needed for Solow--Polasky diversity is covered by~\cite{Emmerich2026PolynomialTime}.  We take this formula as the starting point of the present analysis.  The question addressed here is what the known gap decomposition implies for fixed-cardinality optimization: does maximizing the inverse-matrix objective force an optimal finite set to distribute its gaps uniformly?

The main result answers this question exactly.  On $[0,1]$, the fixed-cardinality maximizer of $D_\beta$ is the equally spaced set, and for $k\geq 2$ this maximizing $k$-point subset is unique.  Equivalently, the inverse-matrix objective selects the uniform gap representation of the interval.  A converse result complements the theorem: among normalized non-increasing distance kernels, requiring the exact adjacent-gap additive structure of the baseline-corrected inverse-matrix diversity forces the exponential kernel family.

A further consequence is obtained by isometry.  Throughout, $\ell_1$ denotes the L1, or Manhattan, distance.  Ordered $\ell_1$ curves have additive accumulated length, and hence can be parameterized by an interval without changing distances along the curve.  The interval theorem therefore transfers to such curves: optimal $k$-point subsets are uniformly spaced in accumulated $\ell_1$ length.  As an application, monotone biobjective Pareto fronts can be viewed in this way under the $\ell_1$ objective-space metric.  Finite approximations of continuous Pareto fronts are often sought to represent all parts of the front; in this setting, Solow--Polasky diversity gives an exact mathematical justification for uniform spacing in accumulated objective-space change.  This observation may be useful, for example, when finite populations or archives are used as discrete Pareto-front approximations in multiobjective optimization.

The remainder is organized as follows.  Section~\ref{sec:line-theory} contains the line formula, the uniform-spacing theorem and proof, and the converse kernel result.  Section~\ref{sec:ordered-l1-curves} gives the ordered-$\ell_1$ reduction and the Pareto-front interpretation.  Section~\ref{sec:finite-front-examples} gives two finite-front examples.  The appendices keep the Jensen derivation and the expanded derivation of the converse kernel result.

\section{Line theory and kernel characterization}
\label{sec:line-theory}

We first work in the metric space where the proof mechanism is explicit: a real interval with its usual distance.  This keeps the additive gap structure visible before it is transferred to ordered $\ell_1$ curves.

\subsection{Uniform spacing on an interval}
\label{sec:unit-line-theory}

We use the following line formula.  In magnitude theory it is the finite-line formula of Leinster and Willerton~\cite{LeinsterWillerton2009}; in the Solow--Polasky setting, the fixed-scale gap-sum form used here is also proved in~\cite{Emmerich2026PolynomialTime}.  If
\[
 0 \leq x_1 < x_2 < \cdots < x_k \leq 1
\]
and $\delta_i=x_{i+1}-x_i$, then
\begin{equation}
\label{eq:line-formula}
D_\beta(\{x_1,\ldots,x_k\})
 = 1 + \sum_{i=1}^{k-1} \tanh\!\left(\frac{\beta\delta_i}{2}\right).
\end{equation}

We briefly recall why the inverse-matrix Solow--Polasky objective reduces to this gap sum on the real line.  For $i<j$, the distance from $x_i$ to $x_j$ is the sum of the consecutive gaps between them,
\[
 d(x_i,x_j)=\sum_{r=i}^{j-1}\delta_r .
\]
Consequently, if $a_i=e^{-\beta\delta_i}$, then the exponential similarity kernel factorizes as
\[
 e^{-\beta d(x_i,x_j)}
 = \prod_{r=i}^{j-1} a_r .
\]
This product structure is special to ordered points on a line.  It yields a tridiagonal inverse similarity matrix, and summing the corresponding row sums gives
\[
D_\beta(\{x_1,\ldots,x_k\})
=
1+\sum_{i=1}^{k-1}\frac{1-a_i}{1+a_i}.
\]
Each gap contribution can be written as
\[
\frac{1-a_i}{1+a_i}
=
\frac{a_i^{-1/2}(1-a_i)}{a_i^{-1/2}(1+a_i)}
=
\frac{a_i^{-1/2}-a_i^{1/2}}
 {a_i^{-1/2}+a_i^{1/2}}
=
\tanh\!\left(\frac{\beta\delta_i}{2}\right).
\]
Thus, on the real line, the inverse-matrix objective separates into independent contributions of consecutive gaps.  In particular, after the points are ordered, the fixed-cardinality optimization problem becomes a gap-allocation problem.  This one-dimensional structure is also the basis of the scaled Solow--Polasky diversity dynamic program for ordered finite $\ell_1$ instances~\cite{Emmerich2026PolynomialTime} and is consistent with the $\ell_1$ magnitude viewpoint of Leinster and Meckes~\cite{LeinsterMeckes2017}.

\begin{theorem}[Uniform spacing on the unit interval]
\label{thm:unit-line-uniform}
Let $k\geq 2$ and $\beta>0$.  Among all $k$-point subsets of the unit interval $[0,1]$ with the usual distance, the Solow--Polasky diversity $D_\beta$ is maximized by the uniformly spaced set
\[
 S_k^\star = \left\{0,\frac{1}{k-1},\frac{2}{k-1},\ldots,1\right\}.
\]
The maximum value is
\[
 D_\beta(S_k^\star)
 = 1 + (k-1)\tanh\!\left(\frac{\beta}{2(k-1)}\right).
\]
Moreover, for $k\geq 2$, the maximizing $k$-point subset is unique and is given by $S_k^\star$.
\end{theorem}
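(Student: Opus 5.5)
The plan is to use the line formula \eqref{eq:line-formula} to turn the problem into a concave gap-allocation problem, and then to apply Jensen's inequality with a strictness argument. First, any $k$-point subset of $[0,1]$ can be listed as $0\le x_1<\cdots<x_k\le 1$. Its gaps $\delta_i=x_{i+1}-x_i$ are strictly positive and satisfy
\[
\sum_{i=1}^{k-1}\delta_i = x_k-x_1 \le 1 .
\]
Conversely, every such gap vector together with a starting point $x_1$ gives an admissible set. By \eqref{eq:line-formula},
\[
D_\beta=1+\sum_{i=1}^{k-1} f(\delta_i),\qquad f(t)=\tanh(\beta t/2),
\]
so it suffices to maximize $\sum_i f(\delta_i)$ over positive gaps with total length at most $1$.

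The key analytic facts about $f$ on $(0,\infty)$ are the following.
\begin{itemize}
\item $f$ is strictly increasing, since $f'(t)=\tfrac{\beta}{2}\operatorname{sech}^2(\beta t/2)>0$.
\item $f$ is strictly concave, since $f''(t)=-\tfrac{\beta^2}{2}\operatorname{sech}^2(\beta t/2)\tanh(\beta t/2)<0$ for $t>0$.
\end{itemize}

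Next I would write $L=\sum_i\delta_i\le 1$ and apply Jensen's inequality to the concave $f$:
\[
\sum_{i=1}^{k-1} f(\delta_i)\le (k-1)\,f\!\left(\frac{L}{k-1}\right)\le (k-1)\,f\!\left(\frac{1}{k-1}\right).
\]
The second step uses monotonicity of $f$. This gives the stated bound $1+(k-1)\tanh\bigl(\beta/(2(k-1))\bigr)$, and plugging in $S_k^\star$ shows that the bound is attained.

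For uniqueness, I would trace the equality cases.
\begin{itemize}
\item Strict concavity makes the Jensen step an equality only when all $\delta_i$ are equal.
\item Strict monotonicity makes the second step an equality only when $L=1$.
\item $L=x_k-x_1=1$ inside $[0,1]$ forces $x_1=0$ and $x_k=1$.
\end{itemize}
Equal gaps summing to $1$ are all $1/(k-1)$, so the set must be $S_k^\star$. For $k=2$ Jensen is trivial, and the argument reduces to $L=1$, which forces $\{0,1\}$.

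I do not expect a real obstacle, since the line formula carries the heavy lifting. The one point needing care is the equality analysis for uniqueness: the concavity must be strict on the whole open half-line $(0,\infty)$ where the gaps live, and monotonicity is needed separately to force the endpoints $0$ and $1$ into the set. It is also worth noting that a maximum exists at all because the bound is attained, so no compactness argument is needed. The fact that distinct points give strictly positive gaps keeps us in the domain where both strictness properties hold.
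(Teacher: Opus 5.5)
Your proposal is correct and follows essentially the same route as the paper: the line formula reduces the problem to maximizing $\sum_i \tanh(\beta\delta_i/2)$, and strict monotonicity together with strict concavity and the equality case of Jensen's inequality give both the bound and uniqueness. Your chained inequality $\sum_i f(\delta_i)\le (k-1)f(L/(k-1))\le (k-1)f(1/(k-1))$ is a slightly tidier version of the paper's argument. The paper first asserts that an optimal set must span $[0,1]$ and only then applies Jensen, which tacitly assumes that an optimum exists.
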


\begin{proof}
Let $S=\{x_1<\cdots<x_k\}\subseteq[0,1]$ and let $\delta_i=x_{i+1}-x_i$ for $i=1,\ldots,k-1$.  By the line formula \eqref{eq:line-formula}, maximizing $D_\beta(S)$ is equivalent to maximizing
\[
 \sum_{i=1}^{k-1} f(\delta_i),
 \qquad
 f(t)=\tanh\!\left(\frac{\beta t}{2}\right),
\]
subject to $\delta_i>0$ and $\sum_{i=1}^{k-1}\delta_i\leq 1$.

The function $f$ is strictly increasing on $[0,\infty)$.  Hence an optimal set must have total span one: otherwise, its gaps could be increased so that the selected points span the full unit interval, and the objective would increase.  Thus, at optimality,
\[
 \sum_{i=1}^{k-1}\delta_i=1.
\]
The function $f$ is also strictly concave on $(0,\infty)$, since
\[
 f''(t)
 = -\frac{\beta^2}{2}\,\operatorname{sech}^2\!\left(\frac{\beta t}{2}\right)
      \tanh\!\left(\frac{\beta t}{2}\right) <0
 \qquad (t>0).
\]
Figure~\ref{fig:f-and-second-derivative} visualizes this increasing and strictly concave behavior by plotting both $f$ and $f''$ for the representative scale value $\beta=\plotbeta$.
Therefore Jensen's inequality (Appendix~\ref{app:jensen}; see also~\cite{HardyLittlewoodPolya1952}) gives
\[
 \frac{1}{k-1}\sum_{i=1}^{k-1} f(\delta_i)
 \leq
 f\!\left(\frac{1}{k-1}\sum_{i=1}^{k-1}\delta_i\right)
 = f\!\left(\frac{1}{k-1}\right).
\]
Multiplying by $k-1$ yields
\[
 \sum_{i=1}^{k-1} f(\delta_i)
 \leq
 (k-1)f\!\left(\frac{1}{k-1}\right).
\]
In other words, the uniformly spaced gap vector gives an objective value no smaller than any other feasible gap vector.  Because $f$ is strictly concave, this inequality is strict unless all gaps are equal.  Equality in Jensen's inequality holds if and only if
\[
 \delta_1=\delta_2=\cdots=\delta_{k-1}=\frac{1}{k-1}.
\]
Together with the full-span condition this gives $x_1=0$ and $x_k=1$, and hence the uniformly spaced set $S_k^\star$.
\end{proof}

\begin{remark}[Direct evaluation on ordered sets]
Equation~\eqref{eq:line-formula} is also computationally useful.  Once the selected points are given in their line order, the Solow--Polasky diversity does not have to be evaluated by forming and inverting the full similarity matrix.  It is enough to compute the consecutive gaps and sum the $k-1$ terms in~\eqref{eq:line-formula}.  Thus, for an already ordered $k$-point set on a line, or on an ordered $\ell_1$ curve after applying the scalar coordinate used below, the value can be computed in linear time in $k$.
\end{remark}

\begin{remark}[Relation to minimum-gap criteria]
Maximizing the minimum pairwise distance on a line is equivalent to maximizing the smallest consecutive gap.  This bottleneck objective also leads to uniform gaps on the continuous interval, and it may be viewed as the limiting criterion obtained from Riesz-type repulsion when the exponent tends to infinity.  However, it only reacts to the closest pair.  Once the minimum gap is fixed, improvements in other gaps or in more distant pairs are invisible to the objective.  One can refine the criterion by a lexicographic tie-breaker on the ordered gap vector, or by adding a small secondary term, but such augmentations are less direct and can be numerically delicate.  The Solow--Polasky objective avoids this pure bottleneck behavior: through the concave tanh-sum in~\eqref{eq:line-formula}, every adjacent gap contributes to the diversity value.
\end{remark}

\begin{figure}[t]
\centering
\begin{tikzpicture}
\begin{groupplot}[
    group style={group size=2 by 1, horizontal sep=1.6cm},
    width=0.44\textwidth,
    height=5.8cm,
    domain=0:10,
    samples=201,
    axis lines=left,
    xmin=0, xmax=10,
    enlargelimits=false,
    tick label style={font=\small},
    label style={font=\normalsize},
    every axis title/.append style={font=\normalsize},
]

\nextgroupplot[
    xlabel={$t$},
    ylabel={$f(t)$},
    title={$f(t)=\tanh(\beta t/2)$},
]
\addplot[thick, blue]
{
    (exp(\plotbeta*x/2)-exp(-\plotbeta*x/2)) /
    (exp(\plotbeta*x/2)+exp(-\plotbeta*x/2))
};

\nextgroupplot[
    xlabel={$t$},
    ylabel={$f''(t)$},
    title={$f''(t)$},
]
\addplot[thick, red]
{
    -(\plotbeta^2/2) *
    (4/((exp(\plotbeta*x/2)+exp(-\plotbeta*x/2))^2)) *
    ((exp(\plotbeta*x/2)-exp(-\plotbeta*x/2)) /
     (exp(\plotbeta*x/2)+exp(-\plotbeta*x/2)))
};

\end{groupplot}
\end{tikzpicture}
\caption{The gap contribution function $f(t)=\tanh(\beta t/2)$ and its second derivative on the domain $0\leq t\leq 10$, shown for $\beta=\plotbeta$.  The positivity of $f$ and negativity of $f''$ away from $t=0$ visualize the increasing, strictly concave behavior used in the Jensen argument.}
\label{fig:f-and-second-derivative}
\end{figure}

The interval theorem uses the exponential kernel in an essential way.  Before passing to ordered $\ell_1$ curves, we record the converse result showing that exact adjacent-gap additivity forces this kernel family.

\subsection{Uniqueness of the exponential kernel}
\label{sec:exponential-kernel-uniqueness}

The uniform-spacing result above relies on the finite-line formula~\eqref{eq:line-formula}.  For the exponential kernel, this formula says that the diversity in excess of the singleton baseline is additive in adjacent gaps.  The baseline matters: for every normalized kernel with $K(0)=1$, a singleton has similarity matrix $[1]$ and hence
\[
  D_K(\{x\})=\mathbf{1}^\top [1]^{-1}\mathbf{1}=1.
\]
Thus the natural additive quantity is
\[
  E_K(S):=D_K(S)-1.
\]
The correction by $1$ is forced.  If a three-point gap identity were written with an arbitrary constant $c$,
\[
  D_K(\{0,a,a+b\})
  =D_K(\{0,a\})+D_K(\{0,b\})-c,
\]
then letting $b\downarrow0$ gives $D_K(\{0,b\})\to D_K(\{0\})=1$ and the left-hand side reduces to $D_K(\{0,a\})$.  Hence $c=1$.  A zero-length gap therefore contributes no excess diversity only under the singleton-baseline normalization.

The following converse shows that this additive mechanism is specific to the exponential family.  It does not select one universal numerical scale; rather, once the kernel is fixed, the value $K(1)$ determines the parameter.  Appendix~\ref{app:converse-details} gives a more detailed derivation, including the continuous Cauchy-equation step.

\begin{proposition}[Adjacent-gap additivity forces the exponential family]
\label{prop:converse-exponential-kernel}
Let $K:[0,\infty)\to(0,1]$ be continuous, non-increasing, normalized by $K(0)=1$, and satisfy $K(t)<1$ for every $t>0$.  For a finite ordered set $S=\{x_1<\cdots<x_k\}\subset\mathbb{R}$, define
\[
  (Z_K(S))_{ij}=K(|x_i-x_j|),
  \qquad
  D_K(S)=\mathbf{1}^\top Z_K(S)^{-1}\mathbf{1},
\]
whenever $Z_K(S)$ is nonsingular, and put $E_K(S)=D_K(S)-1$.  Suppose that this excess diversity is additive in adjacent gaps for every three-point set:
\begin{equation}
\label{eq:three-point-excess-additivity}
  E_K(\{0,a,a+b\})
  =E_K(\{0,a\})+E_K(\{0,b\})
  \qquad (a,b>0).
\end{equation}
Then $K$ belongs to the exponential family.  More precisely,
\[
  K(t)=e^{-\beta t}\qquad(t\geq0),
\]
where the parameter is uniquely fixed by the one-step value,
\[
  \beta=-\log K(1)>0.
\]
\end{proposition}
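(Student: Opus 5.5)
The plan is to turn the three-point identity \eqref{eq:three-point-excess-additivity} into the multiplicative Cauchy equation $K(a+b)=K(a)K(b)$, and then use continuity. For this I compute both sides of \eqref{eq:three-point-excess-additivity} explicitly for a general kernel.

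\textbf{Two-point sets.} For a two-point set $\{0,a\}$ put $u=K(a)\in(0,1)$. Then $Z=\begin{pmatrix}1&u\\u&1\end{pmatrix}$, and a direct inversion gives $D_K=2/(1+u)$. Hence
\[
E_K(\{0,a\})=\frac{1-u}{1+u}.
\]

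\textbf{Three-point sets.} For $\{0,a,a+b\}$ put $u=K(a)$, $v=K(b)$ and $w=K(a+b)$. Monotonicity gives $0<w\le\min(u,v)<1$. The sum of the entries of $Z^{-1}$ is the sum of all cofactors divided by the determinant:
\[
D_K=\frac{N(w)}{\Delta(w)},\qquad
\Delta(w)=1+2uvw-u^2-v^2-w^2,
\]
\[
N(w)=3-u^2-v^2-w^2-2u-2v-2w+2uv+2vw+2uw .
\]
Hypothesis \eqref{eq:three-point-excess-additivity} says that $N(w)/\Delta(w)=R$, where
\[
R:=\frac{2}{1+u}+\frac{2}{1+v}-1 .
\]
Clearing the denominator, $w$ is a root of the polynomial
\[
P(w)=N(w)-R\,\Delta(w).
\]
This is a quadratic in $w$ with leading coefficient $R-1=\frac{2}{1+u}+\frac{2}{1+v}-2>0$.

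\textbf{Identifying the root $uv$.} For any $u,v\in(0,1)$, the choice $w=uv$ is realized by the exponential kernel with suitable gaps. The line formula \eqref{eq:line-formula}, written in the form $1+\sum(1-a_i)/(1+a_i)$, then shows $P(uv)=0$ identically in $u,v$.

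\textbf{Excluding the other root (main obstacle).} This is the step I expect to be hardest: showing that the second root $w'$ of $P$ is not admissible. I would first compute $w'$ by Vieta, as $w'=\frac{P(0)}{(R-1)\,uv}$, or equivalently by factoring $P(w)=(R-1)(w-uv)(w-w')$. I would then show that $w'$ either exceeds $\min(u,v)$, contradicting monotonicity, or makes $\Delta(w')=0$, contradicting nonsingularity. The computation can be checked first in the symmetric case $u=v$, and then in general.

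If a clean bound on $w'$ is elusive, I would switch to a continuity argument. As $b\downarrow0$ we have $v\to1$ and $w\to u$, and the admissible branch must be $uv$. Since $K$ is continuous and the two roots are continuous in $(u,v)$, the solution cannot jump to the other branch unless the two roots coincide. Coincidence occurs only where the discriminant vanishes, and I would check that this does not happen for $u,v\in(0,1)$.

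\textbf{Solving Cauchy's equation.} Once $K(a+b)=K(a)K(b)$ holds for all $a,b>0$, set $g=-\log K$. Then $g$ is continuous on $[0,\infty)$, additive, and satisfies $g(0)=0$. The standard argument (first rationals, then density and continuity) gives $g(t)=\beta t$ with $\beta=g(1)=-\log K(1)$. Finally, $\beta>0$ because $K(1)<1$, and this value of $\beta$ is uniquely determined by $K$.
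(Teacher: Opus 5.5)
Your overall structure matches the paper's: the two-point formula, the cofactor formula, a quadratic in $w=K(a+b)$ with $uv$ as a root, exclusion of the other root by monotonicity, and then the continuous Cauchy equation. Using the exponential-kernel line formula to certify $P(uv)=0$ is a legitimate shortcut for the paper's explicit expansion.

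\textbf{The gap.} The only non-routine step, ruling out the second root, is announced but never carried out. What is missing is short. Since $N-\Delta=2(1-u)(1-v)(1-w)$ and $\Delta(w)=(1-u^2)(1-v^2)-(w-uv)^2$, you get
\[
P(w)=\frac{2(uv-w)}{(1+u)(1+v)}\Bigl[(1-u^2)(1-v^2)+(1-uv)(uv-w)\Bigr],
\]
so
\[
w'=\frac{1+uv-u^2-v^2}{1-uv},\qquad
w'-v=\frac{(1-u)(1+u-v-v^2)}{1-uv}.
\]
If $v\le u$, then $1+u-v-v^2\ge 1-v^2>0$, hence $w'>v=\min(u,v)\ge w$. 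Since $w'$ is symmetric in $(u,v)$, this holds in general. So the first alternative of your dichotomy always occurs. The alternative $\Delta(w')=0$ is not needed; one checks $\Delta(w')=(1-u^2)(1-v^2)(u-v)^2/(1-uv)^2$, which vanishes only when $u=v$, and then $w'=1$. This is exactly the computation the paper's proof supplies.

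\textbf{The fallback.} Your continuity argument, as stated, fails at its anchor. Putting $v=1$ into the formula gives $w'=u$, so as $b\downarrow0$ both roots tend to $u$. The limit therefore says nothing about which branch $K$ follows. Indeed $w'-uv=(1-u^2)(1-v^2)/(1-uv)$, so the roots coincide only on the boundary $v=1$ (or $u=1$), which is precisely where you anchor.

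A connectedness argument can be repaired by anchoring at $b=a$ instead. There $w'=1$, which is excluded by $K(2a)<1$. The two roots are continuous in $b$ and distinct for every $b>0$, and $b\mapsto K(a+b)$ is continuous. Hence the branch $K(a+b)=K(a)K(b)$ persists for all $b>0$, and this variant does not even use monotonicity.

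As written, however, neither of your routes actually establishes $w=uv$, and that is the heart of the proposition.
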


\begin{proof}
For two points at distance $t$, writing $r=K(t)$ gives
\[
  D_K(\{0,t\})
  =\mathbf{1}^\top
  \begin{pmatrix}1&r\\ r&1\end{pmatrix}^{-1}
  \mathbf{1}
  =\frac{2}{1+r},
\]
and hence
\[
  E_K(\{0,t\})=\frac{1-r}{1+r}.
\]
Fix $a,b>0$ and write
\[
  u=K(a),\qquad v=K(b),\qquad w=K(a+b).
\]
For the three-point set $\{0,a,a+b\}$, the similarity matrix is
\[
  Z_K=
  \begin{pmatrix}
  1 & u & w\\
  u & 1 & v\\
  w & v & 1
  \end{pmatrix}.
\]
Substituting this matrix and the two-point formula into~\eqref{eq:three-point-excess-additivity}, and simplifying, gives
\[
  (uv-w)\bigl(u^2-uvw-uv+v^2+w-1\bigr)=0.
\]
Thus either $w=uv$, or
\[
  w=\frac{1+uv-u^2-v^2}{1-uv}.
\]
The second alternative is impossible under the non-increasing assumption.  Interchanging $a$ and $b$ if necessary, assume $u\geq v$.  Since $a,b>0$, the assumptions give $0<v\leq u<1$.  Since $a+b>b$ and $K$ is non-increasing, we also have $w\leq v$.  However, the second alternative gives
\[
  w-v
  =\frac{(1-u)(u+1-v-v^2)}{1-uv}>0,
\]
because $u\geq v$ implies $u+1-v-v^2\geq1-v^2>0$.  This contradicts $w\leq v$.  Therefore only the first alternative remains, and
\[
  K(a+b)=K(a)K(b)
  \qquad(a,b>0).
\]
Together with $K(0)=1$, this identity holds on $[0,\infty)$.

Since $K$ is positive, $h(t)=\log K(t)$ is continuous and satisfies
\[
  h(a+b)=h(a)+h(b).
\]
The continuous Cauchy equation on $[0,\infty)$ gives $h(t)=t h(1)$.  Hence
\[
  K(t)=e^{t\log K(1)}=e^{-\beta t},
  \qquad
  \beta=-\log K(1).
\]
Because $K(1)<1$, we have $\beta>0$, and the displayed formula shows that this value of $\beta$ is unique for the given kernel.
\end{proof}

\section{Ordered \texorpdfstring{$\ell_1$}{L1} curves and Pareto fronts}
\label{sec:ordered-l1-curves}

The preceding section is purely one-dimensional.  We now show that ordered $\ell_1$ curves are one-dimensional metric objects in disguise, so the interval theorem applies without changing the Solow--Polasky objective.

\begin{definition}[Ordered $\ell_1$ curve]
A continuous curve $\gamma:[a,b]\to\mathbb{R}^m$ is called ordered in the $\ell_1$ sense if each coordinate function $\gamma_r$ is monotone on $[a,b]$.  For each coordinate choose $\sigma_r\in\{-1,+1\}$ such that $\sigma_r\gamma_r$ is nondecreasing.
\end{definition}

For such a curve define the scalar coordinate
\[
 \phi(t)=\sum_{r=1}^m \sigma_r\bigl(\gamma_r(t)-\gamma_r(a)\bigr).
\]
Then $\phi$ is nondecreasing, and its endpoint value
\[
 L=\phi(b)=\sum_{r=1}^m |\gamma_r(b)-\gamma_r(a)|
\]
is the $\ell_1$ distance between the two endpoints.  If the curve has no constant pieces in the $\ell_1$ metric, then $\phi$ is strictly increasing; otherwise the same statement applies after identifying points at zero $\ell_1$ distance along constant pieces.

\begin{proposition}[Ordered $\ell_1$ curves are intervals]
\label{prop:l1-isometry}
Let $\gamma:[a,b]\to\mathbb{R}^m$ be an ordered $\ell_1$ curve.  For all $s,t\in[a,b]$,
\[
 \|\gamma(t)-\gamma(s)\|_1 = |\phi(t)-\phi(s)|.
\]
Consequently, the image of $\gamma$ with the $\ell_1$ metric is isometric to an interval of length $L$ in the real line.
\end{proposition}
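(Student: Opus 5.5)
The plan is to use monotonicity to turn each absolute value in the $\ell_1$ norm into a signed difference, and then to sum these differences into $\phi$. Fix $s,t\in[a,b]$ and assume without loss of generality that $s\leq t$; the case $t<s$ follows by symmetry of both sides. For each coordinate $r$, the function $\sigma_r\gamma_r$ is nondecreasing by the choice of $\sigma_r$ in the definition. Hence $\sigma_r(\gamma_r(t)-\gamma_r(s))\geq 0$. Since $|\sigma_r|=1$, this gives
\[
 |\gamma_r(t)-\gamma_r(s)| = \sigma_r\bigl(\gamma_r(t)-\gamma_r(s)\bigr).
\]

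Next I will sum over $r=1,\ldots,m$. The left-hand side becomes $\|\gamma(t)-\gamma(s)\|_1$. On the right-hand side, I insert $\pm\gamma_r(a)$ in each term, so that
\[
\sum_r\sigma_r\bigl(\gamma_r(t)-\gamma_r(s)\bigr)=\phi(t)-\phi(s).
\]
Because $\phi$ is a sum of nondecreasing functions, it is nondecreasing, so $\phi(t)-\phi(s)=|\phi(t)-\phi(s)|$. This proves the displayed identity.

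For the consequence, I will consider the map $\Phi:\gamma([a,b])\to\mathbb{R}$ defined by $\gamma(t)\mapsto\phi(t)$. It is well defined: if $\gamma(s)=\gamma(t)$, the identity gives $\phi(s)=\phi(t)$. The identity also shows that $\Phi$ is distance-preserving, and therefore injective. Its image is $\phi([a,b])$, where $\phi$ is continuous because $\gamma$ is, with $\phi(a)=0$ and $\phi(b)=L$. By the intermediate value theorem and monotonicity, $\phi([a,b])=[0,L]$. So $\Phi$ is a surjective isometry onto $[0,L]$.

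The only delicate point is the remark about constant pieces. Working on the image $\gamma([a,b])$ rather than on the parameter interval handles this automatically: parameters at zero $\ell_1$ distance give the same image point, so no extra identification is needed. Otherwise the argument is routine; the key observation is simply that a common sign $\sigma_r$ works for all pairs $s\le t$.
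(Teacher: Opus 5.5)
Your proof is correct and follows essentially the same route as the paper's: the common sign $\sigma_r$ turns each coordinate's absolute difference into a signed one, summing gives $\phi(t)-\phi(s)$, and $\gamma(t)\mapsto\phi(t)$ is the isometry. Your extra checks make explicit what the paper leaves implicit, namely well-definedness on the image when $\gamma$ is not injective, and surjectivity onto $[0,L]$ via continuity and the intermediate value theorem.
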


\begin{proof}
Assume without loss of generality that $s<t$.  Since each $\sigma_r\gamma_r$ is nondecreasing,
\[
 |\gamma_r(t)-\gamma_r(s)|
 = \sigma_r\bigl(\gamma_r(t)-\gamma_r(s)\bigr)
\]
for every coordinate $r$.  Summing over all coordinates gives
\[
 \|\gamma(t)-\gamma(s)\|_1
 = \sum_{r=1}^m |\gamma_r(t)-\gamma_r(s)|
 = \sum_{r=1}^m \sigma_r\bigl(\gamma_r(t)-\gamma_r(s)\bigr)
 = \phi(t)-\phi(s).
\]
This proves the distance identity.  The map $\gamma(t)\mapsto\phi(t)$ therefore preserves all pairwise $\ell_1$ distances, so the curve image is isometric to the interval $[0,L]$.
\end{proof}

\begin{theorem}[Uniform $\ell_1$ spacing on ordered curves]
\label{thm:l1-curve-uniform}
Let $\gamma:[a,b]\to\mathbb{R}^m$ be a continuous ordered $\ell_1$ curve of total $\ell_1$ length $L>0$.  For $k\geq2$, the $k$-point subset of the curve image maximizing Solow--Polasky diversity under the $\ell_1$ norm is obtained by choosing points $y_i=\gamma(t_i)$ such that
\[
 \phi(t_i)=\frac{(i-1)L}{k-1},
 \qquad i=1,
\ldots,k.
\]
Equivalently, consecutive selected points are uniformly spaced in the $\ell_1$ metric:
\[
 \|y_{i+1}-y_i\|_1=\frac{L}{k-1},
 \qquad i=1,\ldots,k-1.
\]
The maximum diversity value is
\[
 1+(k-1)\tanh\!\left(\frac{\beta L}{2(k-1)}\right).
\]
\end{theorem}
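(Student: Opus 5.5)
The proof will transfer Theorem~\ref{thm:unit-line-uniform} through the isometry of Proposition~\ref{prop:l1-isometry}, after rescaling the interval.

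Let $C=\gamma([a,b])$. By Proposition~\ref{prop:l1-isometry}, the map $\Phi:\gamma(t)\mapsto\phi(t)$ is well defined on $C$, because points with equal $\phi$ are at $\ell_1$ distance zero and therefore coincide. It is also distance preserving, and hence injective. Since $\phi$ is continuous and nondecreasing with $\phi(a)=0$ and $\phi(b)=L$, its image is all of $[0,L]$ by the intermediate value theorem. So $\Phi$ is a bijective isometry from $(C,\ell_1)$ onto $[0,L]$.

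Solow--Polasky diversity depends only on the pairwise distances. Therefore for every finite $S\subseteq C$ we have
\[
D_\beta(S)=D_\beta(\Phi(S)),
\]
where the right-hand side uses the usual distance on $\mathbb{R}$. It follows that $k$-point subsets of $C$ and of $[0,L]$ correspond bijectively with equal objective values, and maximizers correspond to maximizers.

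Next we reduce $[0,L]$ to $[0,1]$. Scaling by $1/L$ multiplies all distances by $1/L$, so $D_\beta(T)=D_{\beta L}(T/L)$ for every finite $T\subseteq[0,L]$. Theorem~\ref{thm:unit-line-uniform} holds for every positive scale, so we apply it with scale $\beta L>0$. The unique maximizer on $[0,1]$ is $S_k^\star$, with value $1+(k-1)\tanh\bigl(\beta L/(2(k-1))\bigr)$. Pulling back, the unique maximizer in $[0,L]$ is $\{(i-1)L/(k-1)\}_{i=1}^{k}$, with the same value.

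Pulling back through $\Phi^{-1}$, we choose $t_i$ with $\phi(t_i)=(i-1)L/(k-1)$; such $t_i$ exist by surjectivity. The points $y_i=\gamma(t_i)$ form the unique maximizing subset of $C$. The parameters $t_i$ themselves need not be unique when $\gamma$ has constant pieces, but the points $y_i$ are. The spacing identity $\|y_{i+1}-y_i\|_1=\phi(t_{i+1})-\phi(t_i)=L/(k-1)$ is then immediate from Proposition~\ref{prop:l1-isometry}.

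The only step needing care is the well-definedness and surjectivity of $\Phi$ when $\gamma$ has constant pieces or is not injective. Both are settled by the distance identity of Proposition~\ref{prop:l1-isometry} together with continuity of $\phi$. The remaining steps are direct invocations of earlier results.
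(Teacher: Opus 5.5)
Your proposal is correct and follows the paper's proof: transfer to $[0,L]$ through the isometry $\phi$ of Proposition~\ref{prop:l1-isometry}, then rescale Theorem~\ref{thm:unit-line-uniform}. Your identity $D_\beta(T)=D_{\beta L}(T/L)$ makes explicit the scaling step the paper leaves implicit, and your intermediate-value surjectivity check supplies a detail the paper takes for granted.

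One small slip: the reason you give for well-definedness of $\Phi$ (equal $\phi$ forces equal points) is actually the injectivity direction. Well-definedness instead follows from the other direction of the distance identity, $\gamma(s)=\gamma(t)\Rightarrow\phi(s)=\phi(t)$, or simply from the fact that $\phi(t)=\sum_r\sigma_r\gamma_r(t)-\mathrm{const}$ depends only on $\gamma(t)$.
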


\begin{proof}
By Proposition~\ref{prop:l1-isometry}, the ordered $\ell_1$ curve is isometric to the interval $[0,L]$ through the coordinate $\phi$.  Solow--Polasky diversity depends only on the pairwise metric distances, so maximizing diversity over $k$ points on the curve is equivalent to maximizing it over $k$ points on $[0,L]$.  Scaling Theorem~\ref{thm:unit-line-uniform} from the unit interval to an interval of length $L$ gives the stated points and the stated maximum value.
\end{proof}

\begin{corollary}[Biobjective Pareto fronts]
\label{cor:biobjective-pareto-fronts}
Let $\gamma(t)=(f_1(t),f_2(t))$ be a continuous biobjective Pareto front parameterization with $f_1$ nondecreasing and $f_2$ nonincreasing.  Then, under the $\ell_1$ norm, the front is isometric to a line interval through
\[
 \phi(t)=\bigl(f_1(t)-f_1(a)\bigr)+\bigl(f_2(a)-f_2(t)\bigr).
\]
Therefore the fixed-cardinality Solow--Polasky diversity optimum on the continuous front consists of points that are uniformly spaced in $\ell_1$ distance along the front.
\end{corollary}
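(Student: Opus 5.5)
The plan is to reduce the corollary to Theorem~\ref{thm:l1-curve-uniform} by checking that the biobjective front is an ordered $\ell_1$ curve in the sense of the Definition, with the specific sign choice $\sigma_1=+1$, $\sigma_2=-1$.

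\emph{Ordered curve and scalar coordinate.} Since $f_1$ is nondecreasing, $\sigma_1=+1$ makes $\sigma_1 f_1$ nondecreasing. Since $f_2$ is nonincreasing, $\sigma_2=-1$ makes $-f_2$ nondecreasing. Hence $\gamma$ is ordered in the $\ell_1$ sense. Substituting these signs into the general scalar coordinate $\phi(t)=\sum_r\sigma_r(\gamma_r(t)-\gamma_r(a))$ gives exactly the displayed $\phi(t)=(f_1(t)-f_1(a))+(f_2(a)-f_2(t))$.

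\emph{Isometry.} Proposition~\ref{prop:l1-isometry} then gives $\|\gamma(t)-\gamma(s)\|_1=|\phi(t)-\phi(s)|$. So the front with the $\ell_1$ metric is isometric to $[0,L]$, where
\[
 L=f_1(b)-f_1(a)+f_2(a)-f_2(b).
\]
This proves the first claim. Here one caveat needs a remark. If $\phi$ is constant on some subinterval, then $\gamma$ is constant there, because both monotone coordinates must be constant when their signed sum is constant. In that case distinct parameters give the same point of the image, so the isometry is stated on the image, as in the proposition.

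\emph{Optimality and the degenerate case.} If $L>0$, Theorem~\ref{thm:l1-curve-uniform} applies directly. The optimal $k$-point subsets are $\gamma(t_i)$ with $\phi(t_i)=(i-1)L/(k-1)$. These points exist by continuity of $\phi$ and the intermediate value theorem, since $\phi$ runs continuously from $0$ to $L$. Consecutive points then have $\ell_1$ distance $L/(k-1)$, which is the claimed uniform spacing along the front. If $L=0$, the front is a single point, so no $k$-point subset exists for $k\ge2$ and the statement is vacuous.

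The only step needing care is the identification of the image with $[0,L]$ when $\phi$ is not injective. I would handle it by working throughout on the image and using surjectivity of $\phi$ onto $[0,L]$, rather than with the parameterization.
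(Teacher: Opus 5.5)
Your proposal is correct and follows the paper's route: the paper gives no separate proof, since the corollary is Theorem~\ref{thm:l1-curve-uniform} specialized to $\sigma_1=+1$, $\sigma_2=-1$, with Proposition~\ref{prop:l1-isometry} supplying the isometry. Your additional points are accurate refinements the paper leaves unstated: constant pieces of $\phi$ force $\gamma$ to be constant, the $t_i$ exist by the intermediate value theorem, and the case $L=0$ is vacuous.
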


\begin{remark}[Connection with diversity optimization]
The corollary gives a continuous-front analogue of diversity optimization in multiobjective search.  It is related to indicator-based Solow--Polasky diversity maximization in diversity optimization and multiobjective optimization~\cite{UlrichBaderThiele2010}, to magnitude-based diversity enhancement~\cite{Huntsman2023}, and to results on optimum distribution properties of diversity indicators~\cite{PereverdievaEtAl2025,Nezhadmoghaddam2026riesz}.
\end{remark}

\begin{remark}[Relation to earlier observations on uniformity]
The conclusion should be read as a precise fixed-cardinality result for an ordered one-dimensional metric structure, not as the first observation that Solow--Polasky diversity or magnitude promotes well-spread point sets.  Ulrich, Bader, and Thiele observed in multiobjective search that the Solow--Polasky indicator can favor grid-like configurations~\cite{UlrichBaderThiele2010}, and Huntsman used magnitude-based ideas for diversity enhancement in evolutionary multiobjective optimization~\cite{Huntsman2023}.  More broadly, the magnitude viewpoint also makes clear that diversity and uniformity are related but not identical notions~\cite{LeinsterMeckes2016,Huntsman2023}.  The contribution of the present note is the exact fixed-cardinality statement for the exponential kernel on a line: the finite-line magnitude formula~\cite{LeinsterWillerton2009} reduces the objective to a sum of strictly concave gap terms, hence the continuous optimum has equal consecutive gaps; ordered $\ell_1$ Pareto fronts inherit this statement by an isometry to an interval.
\end{remark}

\begin{remark}[Finite candidate sets]
The preceding theorem concerns selection from a continuous ordered curve.  If selection is restricted to a prescribed finite candidate set on such a curve, exact uniform spacing may be impossible because the required scalar coordinates $iL/(k-1)$ need not be present.  In that discrete case, the isometry still reduces the problem to subset selection on an ordered finite set on the line.  An exact polynomial-time dynamic programming algorithm for this ordered finite $\ell_1$ setting, including biobjective Pareto fronts, is developed in~\cite{Emmerich2026PolynomialTime}.  This positive ordered-case result should be distinguished from the general fixed-cardinality problem, which is NP-hard in general metric spaces and already NP-hard in the Euclidean plane~\cite{EmmerichPereverdievaDeutz2026Metric,EmmerichPereverdievaDeutz2026Plane}.
\end{remark}

\section{Finite-front examples}
\label{sec:finite-front-examples}

The two examples below illustrate how the continuous equal-gap principle appears when selection is restricted to a finite candidate set.  In both cases the ordered $\ell_1$ coordinate reduces the computation to an ordered finite line instance, and the scaled gap-sum dynamic program of~\cite{Emmerich2026PolynomialTime} is used to solve the resulting fixed-cardinality subset-selection problem exactly.

\subsection{A dense connected finite-front example}

The continuous theory predicts exact uniform spacing in the induced line coordinate $\phi$.  For a finite but dense candidate set, exact uniform spacing may not be available, and the polynomial-time scaled Solow--Polasky gap-sum dynamic program from~\cite{Emmerich2026PolynomialTime} solves the fixed-cardinality subset-selection problem to find the best feasible approximation to that pattern.

Consider the $70$ candidate points
\[
 p_i=(x_i,1-x_i^2),\quad x_i=\frac{i-1}{69},\quad i=1,\ldots,70.
\]
These points form an ordered biobjective Pareto-front sample on the curve $f_2=1-f_1^2$.  Under the $\ell_1$ norm, Theorem~\ref{thm:l1-curve-uniform} applies through the induced line coordinate
\[
 t_i=x_i-(1-x_i^2)=x_i^2+x_i-1,
\]
so the discrete subset-selection problem is exactly the ordered line problem for the finite set $\{t_1,\ldots,t_{70}\}\subset[-1,1]$.

We set $\beta=1$ and select $k=10$ points.  Using the scaled gap-sum objective and Bellman recursion from~\cite{Emmerich2026PolynomialTime} gives $kn=700$ dynamic-programming states and
\[
 (k-1)\frac{n(n-1)}{2}=9\cdot\frac{70\cdot 69}{2}=21{,}735
\]
predecessor transitions.  It returns the optimal index set
\[
 I^\star=\{1,14,24,32,40,47,53,59,65,70\}.
\]
Thus the selected abscissae are
\[
 x_{I^\star}=(0,\ 0.18841,\ 0.33333,\ 0.44928,\ 0.56522,\ 0.66667,\ 0.753623,\ 0.84058,\ 0.92754,\ 1),
\]
and the corresponding induced line coordinates are
\[
 t_{I^\star}=(-1,\ -0.7760,\ -0.55556,\ -0.3488,\ -0.1153,  0.1111,\ 0.32157,\\ 0.54715,\ 0.78786,\ 1).
\]

Hence the consecutive selected $\ell_1$-gaps are
\[
 \Delta=(0.22390,\ 0.22054,\ 0.20668,\ 0.23356,\ 0.22642,\ 0.21046,\ 0.225583,\ 0.24071,\ 0.21214).
\]
Their mean is exactly
\[
 \frac{t_{70}-t_1}{k-1}=\frac{2}{9}\approx 0.222222,
\]
which is the uniform continuous spacing on $[-1,1]$, and the largest absolute deviation from this target is approximately $0.018484$.  The corresponding optimal discrete diversity value is
\[
 D_1(I^\star)=1+\sum_{r=1}^{9}\tanh\!\left(\frac{\Delta_r}{2}\right)\approx 1.995878.
\]
For comparison, the continuous optimum on the whole front is
\[
 1+9\tanh\!\left(\frac{1}{9}\right)\approx 1.995905,
\]
so the dense $70$-point candidate set comes very close to the ideal uniformly spaced solution.  The corresponding candidate set and selected subset are shown in Figure~\ref{fig:dense-front-example}.

\begin{figure}[t]
\centering
\begin{tikzpicture}
\begin{axis}[
  width=0.9\textwidth,
  height=8.2cm,
  xmin=0, xmax=1.02,
  ymin=0, ymax=1.04,
  axis lines=left,
  xlabel={$f_1$},
  ylabel={$f_2$},
  enlargelimits=false,
  tick label style={font=\small},
  label style={font=\normalsize},
  clip=false
]

\addplot[domain=0:1,samples=201,smooth,black!45,line width=0.9pt] {1-x^2};

\addplot[
  only marks,
  mark=*,
  mark size=1.6pt,
  blue!65
] coordinates {
(0.000000,1.000000)
(0.014493,0.999790)
(0.028986,0.999160)
(0.043478,0.998110)
(0.057971,0.996639)
(0.072464,0.994749)
(0.086957,0.992439)
(0.101449,0.989708)
(0.115942,0.986557)
(0.130435,0.982987)
(0.144928,0.978996)
(0.159420,0.974585)
(0.173913,0.969754)
(0.188406,0.964503)
(0.202899,0.958832)
(0.217391,0.952741)
(0.231884,0.946230)
(0.246377,0.939298)
(0.260870,0.931947)
(0.275362,0.924176)
(0.289855,0.915984)
(0.304348,0.907372)
(0.318841,0.898341)
(0.333333,0.888889)
(0.347826,0.879017)
(0.362319,0.868725)
(0.376812,0.858013)
(0.391304,0.846881)
(0.405797,0.835329)
(0.420290,0.823356)
(0.434783,0.810964)
(0.449275,0.798152)
(0.463768,0.784919)
(0.478261,0.771267)
(0.492754,0.757194)
(0.507246,0.742701)
(0.521739,0.727788)
(0.536232,0.712455)
(0.550725,0.696702)
(0.565217,0.680529)
(0.579710,0.663936)
(0.594203,0.646923)
(0.608696,0.629490)
(0.623188,0.611636)
(0.637681,0.593363)
(0.652174,0.574669)
(0.666667,0.555556)
(0.681159,0.536022)
(0.695652,0.516068)
(0.710145,0.495694)
(0.724638,0.474900)
(0.739130,0.453686)
(0.753623,0.432052)
(0.768116,0.409998)
(0.782609,0.387524)
(0.797101,0.364629)
(0.811594,0.341315)
(0.826087,0.317580)
(0.840580,0.293426)
(0.855072,0.268851)
(0.869565,0.243856)
(0.884058,0.218442)
(0.898551,0.192607)
(0.913043,0.166352)
(0.927536,0.139677)
(0.942029,0.112581)
(0.956522,0.085066)
(0.971014,0.057131)
(0.985507,0.028775)
(1.000000,0.000000)
};

\addplot[
  only marks,
  mark=*,
  mark size=2.8pt,
  red!75!black
] coordinates {
(0.000000,1.000000)
(0.188406,0.964503)
(0.333333,0.888889)
(0.449275,0.798152)
(0.565217,0.680529)
(0.666667,0.555556)
(0.753623,0.432052)
(0.840580,0.293426)
(0.927536,0.139677)
(1.000000,0.000000)
};

\node[anchor=west,align=left,font=\small] at (axis cs:0.05,0.18)
{$n=70$, $k=10$, $\beta=1$\\selected indices: $1,14,24,32,40,47,53,59,65,70$\\$D_1\approx 1.995878$};

\end{axis}
\end{tikzpicture}
\caption{A dense ordered Pareto-front sample on $f_2=1-f_1^2$ with $70$ candidate points (blue) and the exact optimal $10$-point Solow--Polasky subset under the $\ell_1$ norm (red), computed by the polynomial-time Bellman recursion from~\cite{Emmerich2026PolynomialTime}.  The selected points are nearly uniformly spaced in the induced line coordinate $t=x-(1-x^2)=x^2+x-1$, with target spacing $2/9\approx 0.222222$ and realized gaps
$0.223903,0.220542,0.206679,0.233564,0.226423,0.210460,0.225583,0.240706,0.212140$.}
\label{fig:dense-front-example}
\end{figure}

\subsection{A disconnected ZDT3 finite-front example}

As a second illustration, we consider a disconnected ordered front.  The standard ZDT3 Pareto front is given by
\[
 f_2 = 1-\sqrt{f_1}-f_1\sin(10\pi f_1),
\]
restricted to its five Pareto-optimal components.  We generate $n=100$ candidate points by placing $20$ uniformly spaced candidates on each of the five $f_1$-intervals
\[
\begin{aligned}
 &[0,0.0830015349],\quad [0.1822287280,0.2577623634],\\
 &[0.4093136748,0.4538821041],\quad [0.6183967944,0.6525117038],\\
 &[0.8233317983,0.8518328654].
\end{aligned}
\]
The candidates are ordered by increasing $f_1$, with indices $1,\ldots,100$ assigned consecutively across the five components.

On this ordered front, the $\ell_1$ metric again induces the line coordinate
\[
 s=f_1-f_2.
\]
We set $\beta=1$ and select $k=20$ points maximizing Solow--Polasky diversity.  Applying the scaled Solow--Polasky gap-sum dynamic program from~\cite{Emmerich2026PolynomialTime} to solve the fixed-cardinality subset-selection problem on the ordered candidate set yields the optimal index set
\[
 I^\star=\{1,4,10,20,23,28,32,40,41,45,50,60,61,65,70,80,81,85,90,100\}.
\]
The resulting subset is shown in Figure~\ref{fig:zdt3-sp-subset}.  Its Solow--Polasky diversity value is
\[
 D_1(I^\star)\approx 2.310417.
\]
The target uniform spacing in the induced coordinate is approximately $0.138169$, and the largest absolute deviation of the realized consecutive gaps from this target is approximately $0.032652$.  The selected points approximate the uniform-spacing pattern while respecting the disconnected structure of the front.

\begin{figure}[t]
\centering
\begin{tikzpicture}
\begin{axis}[
  width=0.95\textwidth,
  height=8.2cm,
  xmin=0, xmax=0.88,
  ymin=-0.85, ymax=1.08,
  axis lines=left,
  xlabel={$f_1$},
  ylabel={$f_2$},
  enlargelimits=false,
  tick label style={font=\small},
  label style={font=\normalsize},
  clip=false
]

\foreach \a/\b in {
  0.0000000000/0.0830015349,
  0.1822287280/0.2577623634,
  0.4093136748/0.4538821041,
  0.6183967944/0.6525117038,
  0.8233317983/0.8518328654
}{
  \addplot[
    domain=\a:\b,
    samples=120,
    smooth,
    black!45,
    line width=0.8pt
  ]
  {1 - sqrt(x) - x*sin(deg(10*pi*x))};

  \addplot[
    only marks,
    domain=\a:\b,
    samples=20,
    mark=*,
    mark size=1.45pt,
    blue!65
  ]
  {1 - sqrt(x) - x*sin(deg(10*pi*x))};
}

\addplot[
  only marks,
  mark=*,
  mark size=2.9pt,
  red!75!black
] coordinates {
(0.000000000,  1.000000000)
(0.013105506,  0.880276059)
(0.039316517,  0.764593318)
(0.083001535,  0.669652357)
(0.190179637,  0.621651164)
(0.210056909,  0.476412118)
(0.225958727,  0.360132712)
(0.257762363,  0.242161085)
(0.409313675,  0.242161085)
(0.418696502,  0.120902501)
(0.430425036, -0.007636135)
(0.453882104, -0.124218445)
(0.618396794, -0.124218445)
(0.625578881, -0.241257552)
(0.634556488, -0.357915810)
(0.652511704, -0.458263326)
(0.823331798, -0.458263326)
(0.829332023, -0.571243811)
(0.836832304, -0.681030545)
(0.851832865, -0.773369012)
};

\node[anchor=north west,align=left,font=\small] at (axis cs:0.48,0.96)
{$n=100$, $k=20$, $\beta=1$\\
red: selected subset\\
$D_1(I^\star)\approx 2.310417$};

\end{axis}
\end{tikzpicture}
\caption{Disconnected ZDT3 Pareto-front example with $100$ candidate points, shown in blue, and the exact optimal $20$-point Solow--Polasky subset under the $\ell_1$ norm, shown in red.  The candidates are ordered by increasing $f_1$, with $20$ uniformly spaced candidates on each of the five Pareto-front components.  For $\beta=1$, the selected index set is $I^\star=\{1,4,10,20,23,28,32,40,41,45,50,60,61,65,70,80,81,85,90,100\}$, and the corresponding diversity value is $D_1(I^\star)\approx 2.310417$.}
\label{fig:zdt3-sp-subset}
\end{figure}

\section{Reproducibility}
\label{sec:reproducibility}

A reproducibility package containing the computations used in the two examples is available at
\[
\href{https://github.com/emmerichmtm/SPUniformLinesAndOrderedPFs}{\texttt{github.com/emmerichmtm/SPUniformLinesAndOrderedPFs}}.
\]
\section{Conclusion}

The main message of this paper is that, for the exponential similarity kernel on ordered one-dimensional metric structures, Solow--Polasky diversity has an exact and transparent spacing principle.  On the line, the finite magnitude formula decomposes the diversity into a sum of gap terms, and the strict concavity of $t\mapsto \tanh(\beta t/2)$ makes equal gaps optimal by Jensen's inequality.  Conversely, exact adjacent-gap additivity of the baseline-corrected diversity forces the exponential family among normalized non-increasing kernels, with $\beta=-\log K(1)$.  Thus the continuous fixed-cardinality optimum on an interval is not merely well spread in an informal sense: it is characterized exactly by uniform consecutive distances.

This line result also explains the behavior on monotone Pareto fronts under the $\ell_1$ norm.  Such fronts are isometric to intervals through their accumulated coordinate-wise change, so the same proof applies without additional geometric assumptions.  The resulting recommendation is to interpret uniformity on an ordered Pareto front as uniformity in the induced $\ell_1$ line coordinate, not necessarily as uniformity in the original parameter or in Euclidean arclength.

For finite candidate sets, the exact continuous pattern may be unavailable.  Nevertheless, the isometry still reduces the ordered $\ell_1$ case to a finite ordered subset problem on the line, where dynamic programming can identify the feasible subset whose gap pattern best realizes the continuous uniform-spacing principle.  This separates the tractable ordered case from the more difficult general metric and Euclidean-plane subset-selection problems.

\appendix
\section{A finite Jensen inequality derivation}
\label{app:jensen}

This appendix derives the specific finite uniform-average form of Jensen's inequality used in the proof of Theorem~\ref{thm:unit-line-uniform}.  For the gap-vector problem considered here, the result says that, for a concave gap contribution function, replacing any feasible gap vector by its uniform average-gap vector gives an objective value no smaller than before.  The general inequality is a standard result in convex analysis and classical inequalities; see, for instance, Hardy, Littlewood, and P\'olya~\cite{HardyLittlewoodPolya1952}.

\begin{proposition}[Finite Jensen inequality for uniform averages for concave functions]
Let $I\subseteq\mathbb{R}$ be an interval and let $f:I\to\mathbb{R}$ be concave.  For any $n\geq 1$ and any points $u_1,\ldots,u_n\in I$,
\[
  \frac{1}{n}\sum_{i=1}^n f(u_i)
  \leq
  f\!\left(\frac{1}{n}\sum_{i=1}^n u_i\right).
\]
If $f$ is strictly concave and $n\geq2$, equality holds if and only if
\[
  u_1=u_2=\cdots=u_n.
\]
\end{proposition}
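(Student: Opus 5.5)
The plan is induction on $n$, using only the two-point definition of concavity, $f(\lambda x+(1-\lambda)y)\geq \lambda f(x)+(1-\lambda)f(y)$ for $x,y\in I$ and $\lambda\in[0,1]$, with strict inequality when $f$ is strictly concave, $x\neq y$ and $\lambda\in(0,1)$. The case $n=1$ is trivial. For the step from $n-1$ to $n$ (with $n\geq2$), write $m=\frac1n\sum_{i=1}^n u_i$ and $m'=\frac{1}{n-1}\sum_{i=1}^{n-1}u_i$. Then
\[
 m=\tfrac{n-1}{n}\,m'+\tfrac1n\,u_n .
\]
Here $m'\in I$ because $I$ is an interval and hence convex; this is the only place the interval hypothesis is used. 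Two-point concavity with $\lambda=\frac{n-1}{n}$ gives $f(m)\geq \frac{n-1}{n}f(m')+\frac1n f(u_n)$. The induction hypothesis gives $f(m')\geq \frac{1}{n-1}\sum_{i=1}^{n-1}f(u_i)$. Combining the two yields the inequality.

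For the equality clause under strict concavity, sufficiency is immediate: if all $u_i$ equal $u$, both sides equal $f(u)$. For necessity, I would argue inside the same induction. Suppose equality holds for some $n\geq2$. Then both inequalities used in the step must be equalities, since the first is multiplied by the positive weight $\frac{n-1}{n}$ and the two add up to the final bound.

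The two equalities are used as follows.
\begin{itemize}
\item Equality in the two-point step, with $\lambda=\frac{n-1}{n}\in(0,1)$, forces $m'=u_n$ by strict concavity.
\item Equality in the induction hypothesis, when $n-1\geq2$, forces $u_1=\cdots=u_{n-1}$ by the equality clause for $n-1$. This common value then equals $m'$.
\item When $n=2$, there is nothing to invoke: $m'=u_1$, so the first bullet already gives $u_1=u_2$.
\end{itemize}
Together these give $u_1=\cdots=u_{n-1}=m'=u_n$. To keep this clean, I would state the induction hypothesis as the full proposition, inequality and equality clause together, for $n-1$.

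The main obstacle is only bookkeeping. One must check that equality in the combined bound forces equality in each piece, which holds because both pieces are inequalities in the same direction with positive weights. One must also treat the $n=2$ base of the equality clause separately, as above. No earlier result from the paper is needed. This proposition is what justifies the Jensen step and its equality case in the proof of Theorem~\ref{thm:unit-line-uniform}, applied with $I=(0,\infty)$ and $f(t)=\tanh(\beta t/2)$.
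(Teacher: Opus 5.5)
Your proposal is correct and follows essentially the same route as the paper. Both proofs induct via $\bar u_n=\frac{n-1}{n}\bar u_{n-1}+\frac1n u_n$, combine two-point concavity with the induction hypothesis, and settle the equality case by forcing equality in both pieces, which gives $u_1=\cdots=u_{n-1}$ and $\bar u_{n-1}=u_n$. The differences are only cosmetic: you run the step from $n=2$ onward instead of taking $n=2$ as a separate base case, and you state explicitly that $\bar u_{n-1}\in I$ because an interval is convex, which the paper leaves implicit.
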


\begin{proof}
The defining property of a concave function is that its graph lies above every chord.  Equivalently, for any $u,v\in I$ and any $\lambda\in[0,1]$,
\begin{equation}
\label{eq:two-point-concavity}
  f\bigl(\lambda u+(1-\lambda)v\bigr)
  \geq
  \lambda f(u)+(1-\lambda)f(v).
\end{equation}
If $f$ is strictly concave, then the inequality is strict whenever $0<\lambda<1$ and $u\neq v$.

We prove the stated finite version by induction on $n$.  For $n=1$, the assertion is an equality.  For $n=2$, it is exactly \eqref{eq:two-point-concavity} with $\lambda=1/2$.

Assume the result has been proved for $n-1$ points, where $n\geq3$.  Define the average of the first $n-1$ points by
\[
  \bar u_{n-1}=\frac{1}{n-1}\sum_{i=1}^{n-1}u_i,
\]
and the average of all $n$ points by
\[
  \bar u_n=\frac{1}{n}\sum_{i=1}^n u_i.
\]
Then
\[
  \bar u_n=\frac{n-1}{n}\bar u_{n-1}+\frac{1}{n}u_n.
\]
Applying the two-point concavity inequality to the two inputs $\bar u_{n-1}$ and $u_n$ gives
\[
  f(\bar u_n)
  \geq
  \frac{n-1}{n}f(\bar u_{n-1})+\frac{1}{n}f(u_n).
\]
By the induction hypothesis applied to $u_1,\ldots,u_{n-1}$,
\[
  f(\bar u_{n-1})
  \geq
  \frac{1}{n-1}\sum_{i=1}^{n-1}f(u_i).
\]
Substituting this lower bound into the previous inequality yields
\[
  f(\bar u_n)
  \geq
  \frac{n-1}{n}\cdot\frac{1}{n-1}\sum_{i=1}^{n-1}f(u_i)
  +\frac{1}{n}f(u_n)
  =
  \frac{1}{n}\sum_{i=1}^n f(u_i).
\]
This proves Jensen's inequality for all finite uniform averages.

It remains to record the equality condition used in the main proof.  Suppose $f$ is strictly concave and equality holds for $n\geq2$.  In the induction step, equality can hold only if equality holds in the induction hypothesis and in the final two-point concavity step.  The induction hypothesis forces
\[
  u_1=\cdots=u_{n-1},
\]
and strict concavity in the two-point step forces
\[
  \bar u_{n-1}=u_n.
\]
Together these conditions give $u_1=\cdots=u_n$.  Conversely, if all $u_i$ are equal, then both sides of Jensen's inequality are obviously equal.  This completes the proof.
\end{proof}

\section{Detailed proof for the converse characterization}
\label{app:converse-details}

This appendix expands the proof of Proposition~\ref{prop:converse-exponential-kernel}.  The purpose is to show explicitly how three-point adjacent-gap additivity leads first to the multiplicative identity
\[
  K(a+b)=K(a)K(b),
\]
and then, through the continuous Cauchy equation, to the exponential form.

\subsection*{Step 1: the two-point excess diversity}

For two points at distance $t$, write $r=K(t)$.  The similarity matrix is
\[
  Z_2=
  \begin{pmatrix}
  1&r\\ r&1
  \end{pmatrix},
  \qquad
  Z_2^{-1}=\frac{1}{1-r^2}
  \begin{pmatrix}
  1&-r\\ -r&1
  \end{pmatrix}.
\]
Therefore
\[
\begin{aligned}
  D_K(\{0,t\})
  &=\mathbf{1}^\top Z_2^{-1}\mathbf{1} \\
  &=\frac{1}{1-r^2}(1,1)
    \begin{pmatrix}
    1&-r\\ -r&1
    \end{pmatrix}
    \begin{pmatrix}1\\1\end{pmatrix} \\
  &=\frac{2-2r}{1-r^2}
   =\frac{2}{1+r}.
\end{aligned}
\]
Thus the baseline-corrected excess diversity is
\[
  E_K(\{0,t\})
  =D_K(\{0,t\})-1
  =\frac{2}{1+r}-1
  =\frac{1-r}{1+r}.
\]

\subsection*{Step 2: the three-point excess diversity}

Fix $a,b>0$ and write
\[
  u=K(a),\qquad v=K(b),\qquad w=K(a+b).
\]
For $\{0,a,a+b\}$, the similarity matrix is
\[
  Z_3=
  \begin{pmatrix}
  1&u&w\\
  u&1&v\\
  w&v&1
  \end{pmatrix}.
\]
Set
\[
  \Delta:=\det Z_3=1+2uvw-u^2-v^2-w^2.
\]
The cofactor formula gives
\[
  Z_3^{-1}=\frac{1}{\Delta}
  \begin{pmatrix}
  1-v^2 & vw-u & uv-w\\
  vw-u & 1-w^2 & uw-v\\
  uv-w & uw-v & 1-u^2
  \end{pmatrix}.
\]
Since $D_K(\{0,a,a+b\})=\mathbf{1}^\top Z_3^{-1}\mathbf{1}$, summing all entries of this inverse yields
\[
\begin{aligned}
  D_K(\{0,a,a+b\})
  =\frac{1}{\Delta}\bigl(&3-u^2-v^2-w^2 \\
  &{}-2u-2v-2w+2uv+2uw+2vw\bigr).
\end{aligned}
\]
Subtracting the singleton baseline gives
\[
\begin{aligned}
  E_K(\{0,a,a+b\})
  &=D_K(\{0,a,a+b\})-1 \\
  &=\frac{2(1-u)(1-v)(1-w)}{1+2uvw-u^2-v^2-w^2}.
\end{aligned}
\]
The assumed adjacent-gap additivity of the excess diversity is
\[
  E_K(\{0,a,a+b\})
  =E_K(\{0,a\})+E_K(\{0,b\}).
\]
Using the two-point formula from Step 1, this becomes
\[
  \frac{2(1-u)(1-v)(1-w)}{1+2uvw-u^2-v^2-w^2}
  =\frac{1-u}{1+u}+\frac{1-v}{1+v}.
\]
The right-hand side simplifies to
\[
  \frac{1-u}{1+u}+\frac{1-v}{1+v}
  =\frac{(1-u)(1+v)+(1-v)(1+u)}{(1+u)(1+v)}
  =\frac{2(1-uv)}{(1+u)(1+v)}.
\]
Hence the additivity identity is equivalent to
\[
  \frac{2(1-u)(1-v)(1-w)}{1+2uvw-u^2-v^2-w^2}
  =\frac{2(1-uv)}{(1+u)(1+v)}.
\]
Cancelling the common factor $2$ and cross-multiplying gives
\[
  (1-u)(1-v)(1-w)(1+u)(1+v)
  =(1-uv)(1+2uvw-u^2-v^2-w^2).
\]
Expanding and collecting terms yields the factorization
\[
  (uv-w)\bigl(u^2-uvw-uv+v^2+w-1\bigr)=0.
\]
Consequently, either
\[
  w=uv,
\]
or
\[
  u^2-uvw-uv+v^2+w-1=0.
\]
Solving the second equation for $w$ gives
\[
  w=\frac{1+uv-u^2-v^2}{1-uv}.
\]
The second branch is incompatible with the non-increasing character of $K$.  Indeed, after interchanging $a$ and $b$ if necessary, assume $u\geq v$.  Then $0<v\leq u<1$ and, since $a+b>b$, non-increase gives $w\leq v$.  But on the second branch,
\[
\begin{aligned}
  w-v
  &=\frac{1+uv-u^2-v^2}{1-uv}-v \\
  &=\frac{1+uv-u^2-v^2-v+uv^2}{1-uv} \\
  &=\frac{(1-u)(u+1-v-v^2)}{1-uv}.
\end{aligned}
\]
Here $1-u>0$ and $1-uv>0$.  Moreover, $u\geq v$ implies
\[
  u+1-v-v^2\geq v+1-v-v^2=1-v^2>0.
\]
Thus $w-v>0$, contradicting $w\leq v$.  Therefore the only possible branch is
\[
  w=uv,
\]
which is precisely
\[
  K(a+b)=K(a)K(b).
\]

\subsection*{Step 3: the continuous Cauchy equation}

It remains to identify all continuous positive kernels satisfying
\[
  K(a+b)=K(a)K(b).
\]
Define
\[
  f(t)=-\log K(t).
\]
Because $K(t)>0$ and $K$ is continuous, the function $f$ is continuous.  Moreover,
\[
\begin{aligned}
  f(a+b)
  &=-\log K(a+b) \\
  &=-\log\bigl(K(a)K(b)\bigr) \\
  &=-\log K(a)-\log K(b) \\
  &=f(a)+f(b).
\end{aligned}
\]
Thus $f$ satisfies Cauchy's additive equation on $[0,\infty)$:
\[
  f(a+b)=f(a)+f(b).
\]
We recall the elementary continuous case, i.e., Cauchy's additive equation.  First,
\[
  f(0)=f(0+0)=f(0)+f(0),
\]
so $f(0)=0$.  For every integer $n\geq1$,
\[
  f(nt)=f(t+\cdots+t)=n f(t).
\]
In particular,
\[
  f(n)=n f(1).
\]
Now let $m,n$ be positive integers.  Since
\[
  n\cdot\frac{m}{n}=m,
\]
we get
\[
  n f\!\left(\frac{m}{n}\right)
  =f(m)
  =m f(1).
\]
Therefore
\[
  f\!\left(\frac{m}{n}\right)
  =\frac{m}{n}f(1).
\]
So $f(q)=qf(1)$ for every nonnegative rational number $q$.  Finally, let $t\geq0$ be real and choose rational numbers $q_j\geq0$ with $q_j\to t$.  By continuity,
\[
  f(t)=\lim_{j\to\infty} f(q_j)
      =\lim_{j\to\infty} q_j f(1)
      =t f(1).
\]
Thus every continuous solution is linear:
\[
  f(t)=t f(1).
\]
Returning to $K$, this gives
\[
  -\log K(t)=t(-\log K(1)).
\]
With
\[
  \beta=-\log K(1),
\]
we obtain
\[
  K(t)=e^{-\beta t}.
\]
Since the assumptions include $K(1)<1$, we have $\beta>0$.  The same formula also shows uniqueness of the parameter for the given kernel: once $K(1)$ is fixed, the value of $\beta$ is fixed by $\beta=-\log K(1)$.

\end{document}